\documentclass[11pt,reqno]{amsart}
\usepackage{amssymb,amsmath,amscd}
\usepackage{latexsym,bm,bbm,mathrsfs}
\usepackage{hyperref,graphicx, enumerate}
\usepackage{mathtools}
\usepackage{stmaryrd}
\usepackage[all,pdf]{xy}
\usepackage{graphicx}
\usepackage{caption}
\usepackage{tikz-cd}
\usepackage{color}
\graphicspath{ {./} }

\usepackage{stackrel} 
\usepackage[left=30mm, right=30mm, top=30mm, bottom=30mm]{geometry}
\usepackage[shortlabels]{enumitem}
\usepackage{ulem}
\usepackage{verbatim}

\allowdisplaybreaks

\newtheorem{thm}{Theorem}[section]

\newtheorem*{quest*}{Question}

\newtheorem{theorem}[thm]{Theorem}

\newtheorem{lemma}[thm]{Lemma}

\newtheorem{remark}[thm]{Remark}
\newtheorem{defn}[thm]{Definition}

\newtheorem*{theorem*}{Theorem}

\newcommand{\tors}{\left[\operatorname{tors}\right]}

\newcommand{\bbf}{{\mathbb{F}}}

\newcommand{\bbq}{{\mathbb{Q}}}

\newcommand{\bbz}{{\mathbb{Z}}}

\newcommand{\cB}{\mathcal{B}}

\newcommand{\GLp}{\operatorname{GL}_{2}\left(\Fp\right)}

\newcommand{\Gal}{\operatorname{Gal}}
\renewcommand{\Im}{\operatorname{Im}}

\newcommand{\Fp}{\bbf_{p}}
\newcommand{\Fpp}{\bbf_{p^{2}}}
\newcommand{\Fpx}{\bbf_{p}^{\times}}
\newcommand{\Fppx}{\bbf_{p^{2}}^{\times}}
\newcommand{\cC}{\mathcal{C}}

\newcommand{\End}{\operatorname{End}}
\newcommand{\cyc}{\operatorname{cyc}_{p}}

\title[]{Uniform bounds on prime levels of abelian division fields of elliptic curves over number fields without rationally defined CM}

\author{Hansol Kim}
\address{Department of Mathematics, Ajou University, Suwon 16499, Republic of Korea}
\email{jawlang@ajou.ac.kr}
\thanks{Hansol Kim was supported by the National Research Foundation of Korea (NRF) grant funded by the Korea government (MSIT) (No. RS-2024-00334558).}

\date{\today}
\subjclass[2020]{Primary 11G05,  Secondary 14H52, 11F80}
\keywords{elliptic curve, Galois representation}

\begin{document}
\maketitle
\begin{abstract}
	Enrique Gonz{\'a}lez-Jim{\'e}nez and {\'A}lvaro Lozano-Robledo proved that there exists a uniform bound on the levels $n$ for which the $n$-division field $\bbq\left(E\left[n\right]\right) / \bbq$ of an elliptic curve $E/\bbq$ defined over $\bbq$ is abelian. Assuming GRH (Generalized Riemann Hypothesis), Allen and Genao partially generalized this result by restricting to prime levels while allowing arbitrary number fields without rationally defined CM. In this paper, we remove the GRH assumption and prove that the property established by Allen and Genao is in fact equivalent to the absence of rationally defined CM.
\end{abstract}

\section{Introduction}

We let $E/K$ be an elliptic curve defined over a number field $K$. By Serre \cite{Serre72}, if $E$ does not have CM, then the mod-$p$ Galois representation is surjective for all sufficiently large primes $p$. In contrast, if $E$ has CM, then the image of the mod-$p$ Galois representation is contained in the normalizer of a Cartan subgroup for all sufficiently large primes $p$ (see, e.g., \cite[Ch.~II]{Silverman2}; see also \cite[§~7]{Z15} for an explicit argument showing the containment in the normalizer of a Cartan subgroup over $\bbq$).

Motivated by this dichotomy, several uniformity results have been established over number fields without RCM (rationally defined CM).

\begin{defn}[{\cite[p.2]{G24}}]
	We say a number field $K$ does not have RCM if $\End_{K}\left(E\right) = \bbz$ for any elliptic curve $E/K$ defined over $K$.
\end{defn}

\begin{theorem}[{\cite[Theorems~1~and~3]{G24}}]\label{thm:growth}
	If $K$ has the following {\normalfont \textbf{Property}}~\eqref{eqn:property_growth}\begin{equation}\label{eqn:property_growth}
		\begin{aligned}
			&\text{there exists a constant $C_{K}$ depending only on $K$ such that}\\
			&\text{for any elliptic curve } E/K \text{ and }\\
			&\text{any finite extension } L/K \text{ of degree whose minimal prime divisor is } > C_{K}\\
			&E\left(L\right)\tors = E\left(K\right)\tors,
		\end{aligned}
	\end{equation} then $K$ does not have RCM. The converse holds under GRH (Generalized Riemann Hypothesis).
\end{theorem}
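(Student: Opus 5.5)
The statement has two directions, and I would treat them separately. The implication "Property~\eqref{eqn:property_growth} $\Rightarrow$ no RCM" I would prove in contrapositive form. Suppose $K$ has RCM. Then there is an elliptic curve $E/K$ with $\End_{K}(E)=\mathcal{O}$, an order in an imaginary quadratic field $F\subseteq K$. For every constant $C$ I want a finite extension $L/K$, all of whose degree's prime factors exceed $C$, with $E(L)\tors\neq E(K)\tors$.

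The tool is the $p$-adic representation for a prime $p>C$ that splits in $F$ and does not divide the conductor of $\mathcal{O}$. For such $p$, $T_p(E)\cong \mathcal{O}\otimes\bbz_p\cong\bbz_p\times\bbz_p$ decomposes into two Galois-stable eigenlines. Since $F\subseteq K$, $G_K$ acts on each line through a character $\chi:G_K\to\bbz_p^{\times}$ attached to the Hecke character of $E$, and the image of $\chi$ is open.

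Write $\chi(G_K)=A\times B$, where $A\subseteq\mu_{p-1}$ and $B\subseteq 1+p\bbz_p$ is pro-$p$. If $A$ is trivial, i.e.\ $\chi\equiv 1 \pmod p$, I would argue as follows. Let $P_n$ generate the eigenline modulo $p^n$. Then $K(P_n)/K$ has degree a power of $p>C$, and it is nontrivial for $n\gg0$ because $B$ is infinite. This gives $E(K(P_n))\tors\supsetneq E(K)\tors$ as soon as $E(K)\tors$ is finite, which it is.

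So the main obstacle in this direction is arranging $A=1$, i.e.\ producing a $K$-rational $p$-torsion point on a CM curve with CM by $\mathcal{O}$, or a suitable substitute. Trace and twisting arguments do not obviously remove the prime-to-$p$ part, since the automorphism twists only kill characters of order dividing $2,4,6$. My first attempt would be to vary the curve within the CM class, using $\mathcal{O}$-isogenies and the freedom in choosing $E/K$. Failing that, I would replace the single prime $p$ by an auxiliary extension of $K$ cut out by the Hecke character. That extension should have degree divisible only by large primes, and I would build it via class field theory over $K$, using that the image of the Hecke character in $(\mathcal{O}/\mathfrak{m})^{\times}$ can be made to contain elements of large prime order. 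I expect this step to require the most care.

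For the converse under GRH, assume $K$ has no RCM and fix $E/K$. Let $L/K$ have degree whose least prime factor exceeds $C_K$, and suppose $E(L)$ acquires a new point of prime order $\ell$ (it suffices to rule out growth of $\ell$-primary parts prime by prime). Then the image of the mod-$\ell$ representation $\rho_{E,\ell}(G_K)$ has an orbit on $E[\ell]\setminus\{0\}$, or on the relevant lifts in $E[\ell^n]$, whose stabilizer has index divisible only by large primes.

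I would split into two cases according to $\ell$. For $\ell$ bounded in terms of $K$, only finitely many subgroups of $\operatorname{GL}_2(\bbz/\ell^n)$ occur up to the bounded level at which growth can happen. These indices have bounded prime factors, and $C_K$ is chosen larger than all of them. For $\ell$ large, the absence of RCM means $E$ has no CM defined over $K$. The uniform open-image results available under GRH (Larson--Vaintrob type bounds) then show that $\rho_{E,\ell}$ is surjective or lies in the normalizer of a Cartan subgroup with an explicit small exceptional set. In each case the orbit sizes on $E[\ell]$ and on $E[\ell^n]$ are divisible by $2$ or by small primes, which is incompatible with the degree hypothesis. Here the absence of RCM is exactly what excludes the case where the image lies inside a Cartan subgroup itself, where the eigenlines would give orbits of sizes dividing $\ell-1$ without any forced small factor.
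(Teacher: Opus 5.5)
The paper does not prove this statement; it is quoted from \cite{G24}. Your proposal therefore has to stand on its own, and its first direction has a genuine gap.

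\textbf{The first direction.} Your mechanism needs $A=1$, i.e.\ $E[\mathfrak p]\subseteq E(K)$, a $K$-rational point of order $p>C$. By Merel's theorem this is impossible for every elliptic curve over $K$ once $C$ exceeds a bound depending only on $[K:\bbq]$, and you need every $C$. Varying $E$ by $\mathcal O$-isogenies changes nothing, since $K$-isogenous curves with CM by $\mathcal O$ have the same $\chi$.

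In fact a $p$-power degree is the wrong target. If a nonzero point of the eigenline lies in $E(L)$, then $\chi(\Gamma_L)\subseteq 1+p\bbz_p$, so $|A|$ divides $[L:K]$. What you need is not $A=1$ but that $|A|$ have no prime factor $\le C$. The tool for this is the twist you dismissed: you never have to remove the whole prime-to-$p$ part, only its small primes, and for a well-chosen prime these are exactly the $\mu_w$-part, where $w=|\mathcal O^{\times}|\in\{2,4,6\}$.

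Concretely, take $\mathcal O=\mathcal O_F$; this is allowed because RCM puts the Hilbert class field of $F$ inside $K$. Take $C\ge 3$ and choose, by Dirichlet and CRT, a prime $\ell$ split in $F$ with $(\ell-1)/w$ prime to every prime $\le C$. These congruences are compatible with splitting, and $F=\bbq(i),\bbq(\sqrt{-3})$ are exactly the cases where $w=4,6$ is forced. Write $\bbf_{\ell}^{\times}=\mu_w\times M$ with $|M|=(\ell-1)/w$, and the character on $E[\mathfrak p]$ as $\psi=\psi_w\psi_M$. Twist $E$ by the character $\varepsilon\colon\Gamma_K\to\mu_w=\mathcal O^{\times}$ lifting $\psi_w^{-1}$. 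A generator $P$ of $E^{\varepsilon}[\mathfrak p]$ then has $[K(P):K]$ dividing $(\ell-1)/w$. Also $P\notin E^{\varepsilon}(K)$ for $\ell$ large, again by Merel. So $L=K(P)$ violates Property~\eqref{eqn:property_growth}.

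Your class-field-theoretic fallback does not supply this. For a torsion point with annihilator $\mathfrak a$, the degree is the full order of the image of $\Gamma_K$ in $(\mathcal O/\mathfrak a)^{\times}$, not the order of some element of large prime order.

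\textbf{The GRH direction.} Your outline has the right shape, but its decisive step is asserted rather than proved. The bounded-$\ell$ case needs no ``bounded level'', which you could not justify anyway since $[L:K]$ is unbounded. For $P\in E[\ell^{\infty}]$, the degree $[K(P):K]$ divides $|\operatorname{GL}_2(\bbz/\ell^{n}\bbz)|$, whose prime factors are at most $\ell+1$.

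For large $\ell$, Larson--Vaintrob under GRH gives absolute irreducibility of $E[\ell]$, and Merel reduces you to points of order $\ell$. But the claim that every orbit on $E[\ell]\setminus\{0\}$ then has size divisible by a prime $\le C$ is the real content of the converse in \cite{G24}. It does not follow from the shape of the image. For example, suppose $G=G_{E/K,\ell}$ lies in the normalizer of a non-split Cartan. A point fixed by an involution in $G\setminus\cC_{ns}$ has orbit size $|G\cap\cC_{ns}|$. For suitable $\ell\equiv 3\pmod 4$ there are absolutely irreducible such $G$ with surjective determinant for which this order is odd and prime to $3$. Excluding them requires arithmetic input, such as the inertia action at primes above $\ell$.
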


Without GRH, the converse part of Theorem~\ref{thm:growth} is proven for $K$ such that $\left[K:\bbq\right] = 1,2$ by \cite[Theorem~7.2.i]{GN20} and \cite[Theorem~1.2]{IK24}, respectively.

\begin{theorem}[{\cite[Theorem~7.2.i]{GN20}}]
	$\bbq$ has \textbf{{\normalfont \textbf{Property}}}~\eqref{eqn:property_growth}.
\end{theorem}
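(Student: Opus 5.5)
We have to show that $\bbq$ has \textbf{Property}~\eqref{eqn:property_growth}. Concretely: there is an absolute constant $C$ such that for every $E/\bbq$ and every number field $L/\bbq$ whose degree has all prime factors $>C$, we have $E\left(L\right)\tors = E\left(\bbq\right)\tors$. The plan is to argue by contradiction. Suppose $E(L)\tors \neq E(\bbq)\tors$. Then some point $P \in E(L)$ of prime order $\ell$ lies in $E(L)\tors$ but not in $E(\bbq)$. More precisely, pick $P\in E(L)\tors\setminus E(\bbq)\tors$ of minimal order $n$; then $\ell P\in E(\bbq)$ for some prime $\ell\mid n$. In particular the field $\bbq(P)\subseteq L$ has degree $>1$, and every prime factor of $[\bbq(P):\bbq]$ is $>C$. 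It therefore suffices to show that, outside a bounded set of primes, the degrees $[\bbq(P):\bbq]$ always have a small prime factor.

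The first key step uses the Galois representation. Let $G=\Gal(\bbq(E[\ell^k])/\bbq)$ act on the orbit of $P$, with $\ell P$ fixed. This orbit lies in the coset $P+E[\ell]$, so its size $[\bbq(P):\bbq]$ is either divisible by $\ell$ or divides $|\Gal(\bbq(E[\ell])/\bbq)|$. Only orbit sizes coprime to all primes $\le C$ are allowed.

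\emph{Case $\ell\le C$.} Then $\ell\nmid[\bbq(P):\bbq]$. Because $E[\ell]$ is an $\ell$-group, an orbit in $P+E[\ell]$ of size prime to $\ell$ forces a fixed point under a Sylow-$\ell$ subgroup. Moreover, $\bbq(E[\ell])$ has degree dividing $|\mathrm{GL}_2(\bbf_\ell)|=\ell(\ell-1)^2(\ell+1)$. All prime factors of this number are $\le C+1$, so once $C\ge C'+1$ the orbit size must be $1$. Iterating over $n$ (bounded, by Mazur and Merel applied to the finitely many relevant orders) gives $P\in E(\bbq)$.

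\emph{Case $\ell> C$ (the main obstacle).} Here we need $\ell$ to be bounded, and I would invoke uniform results on isogenies and images over $\bbq$. Take $k$ with $\ell^k$ the $\ell$-part of $n$. If $\ell\mid[\bbq(P):\bbq]$, we use the fact, due to Lozano-Robledo, that for $\ell>$ an absolute bound (e.g.\ $\ell>163$, Mazur's isogeny theorem) the minimal degree of a point of order $\ell$ is divisible by $\ell^2-1$, hence by $2$. If instead the orbit size divides $|\Gal(\bbq(E[\ell])/\bbq)|$, then for $\ell>37$ the image mod $\ell$ is either $\mathrm{GL}_2$ or contained in a normalizer of a Cartan subgroup in the CM case. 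In both cases the order of the image is even, since $\det$ is surjective onto $\bbf_\ell^\times$ and complex conjugation has order $2$. The orbit of a nonzero point has size divisible by $\ell^2-1$ in the surjective case, or by $(\ell-1)$ or an even factor in the Cartan case. Either way the orbit size is even, contradicting $C\ge2$.

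Putting the cases together, with $C$ exceeding $163$, Mazur's torsion bound and the constants above, we obtain the property. The delicate point is the Cartan/CM case for large $\ell$. There I would check explicitly, via the finitely many CM $j$-invariants over $\bbq$ and Bourdon--Clark--Stankewicz degree results, that point degrees are even.
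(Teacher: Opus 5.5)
Your skeleton is sound, and the small-prime step is easier than you make it. For $P$ of minimal order with $\ell P\in E(\mathbb{Q})$, Galois acts on the coset $P+E[\ell]$ through a subgroup of the affine group. That group has order dividing $\ell^{3}(\ell-1)^{2}(\ell+1)$, whose prime factors are all $\le\max(\ell,3)$. So for $\ell\le C$ (with $C\ge 3$) you get $[\mathbb{Q}(P):\mathbb{Q}]=1$ at once, with no iteration and no Merel. For $\ell>C\ge 163$ you should state that Mazur's theorem gives $\ell\nmid|E(\mathbb{Q})_{\mathrm{tors}}|$. Hence $\mathbb{Q}(P)=\mathbb{Q}(mP)$ with $mP$ of order $\ell$, and since there is no rational $\ell$-isogeny, the orbit length is prime to $\ell$; your first subcase is therefore vacuous. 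Everything thus rests on one claim: for $\ell$ large, every point of order $\ell$ on every $E/\mathbb{Q}$ has even degree. The paper gives no proof of this statement; it quotes \cite{GN20}, where the result is read off from the explicit list of possible degrees of points of prime order on $E/\mathbb{Q}$, built on Lozano-Robledo's and Zywina's work on mod-$\ell$ images. That input is exactly what your proposal does not supply.

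Your justification of the parity claim has a genuine gap. The assertion that for $\ell>37$ a non-CM curve has image $\mathrm{GL}_2(\mathbb{F}_\ell)$ is Serre's uniformity problem, which is open. Images inside the normalizer $N_{ns}$ of a non-split Cartan subgroup $C_{ns}\cong\mathbb{F}_{\ell^2}^{\times}$ are not excluded, and you never treat them. The parity step is also a non sequitur: evenness of $|G|$ (from complex conjugation or from $\det$ being onto) says nothing about the orbit length $|G|/|\mathrm{Stab}_G(Q)|$, because inside Cartan normalizers the stabilizer can absorb the whole $2$-part. For instance, take $\ell\equiv 3\pmod 4$, let $H\subset C_{ns}$ be the subgroup of odd-order elements, and let $s$ be the reflection $x\mapsto x^{\ell}$. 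Then $G=H\rtimes\langle s\rangle$ has $\det G=\mathbb{F}_\ell^{\times}$ and contains an involution with eigenvalues $1,-1$, yet the orbit of $1\in\mathbb{F}_{\ell^2}$ is $H$, of odd length. Your claim that degrees are divisible by $\ell^2-1$ is also false: CM curves at split primes give $2(\ell-1)$ and $(\ell-1)^2$, and Zywina's index-$3$ subgroup of $N_{ns}$ would give $(\ell^2-1)/3$.

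Closing the gap needs arithmetic input. One route: if $-I\in G_{E,\ell}$, every orbit on $E[\ell]\setminus\{0\}$ has even length, because the central involution $-I$ lies in every Sylow $2$-subgroup but fixes no nonzero vector. For images in $N_{ns}$ with $\ell>13$, apply the inertia result of Lozano-Robledo used in Lemma~\ref{lem:large_p}. It cannot give eigenvalues $1,\alpha^{e}$, since a non-identity element of $N_{ns}$ with eigenvalue $1$ is a reflection. So it gives an element of $C_{ns}$ of even order $(\ell^2-1)/e$, whence $-I\in G$. Alternatively, cite Zywina's classification of non-surjective images for $\ell>37$. For CM curves the inertia constraint alone does not force $-I$ into the image. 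There you need the explicit CM images or the Bourdon--Clark--Stankewicz odd-degree results, as you indicate.
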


\begin{theorem}[{\cite[Theorem~1.2]{IK24}}]
	A quadratic number field $K$ has {\normalfont \textbf{Property}}~\eqref{eqn:property_growth} if and only if $K$ does not have RCM.
\end{theorem}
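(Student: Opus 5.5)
The implication ``{\normalfont \textbf{Property}}~\eqref{eqn:property_growth} $\Rightarrow$ no RCM'' holds over every number field by the first part of Theorem~\ref{thm:growth}. So the work is the converse for a quadratic field $K$ without RCM, and it has to be done without GRH. GRH enters the general argument only through a uniform bound on primes $p$ admitting a $K$-rational $p$-isogeny. My plan is to replace that input with an unconditional isogeny bound available for quadratic fields. A quadratic $K$ has RCM exactly when it is imaginary quadratic of class number one. For every other quadratic field, Momose's isogeny theorem (made effective by Banwait) should give a constant $N_K$: no elliptic curve over $K$ has a $K$-rational $p$-isogeny with $p>N_K$.

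\textbf{Reduction to prime-level orbits.} Let $L/K$ be finite with every prime divisor of $[L:K]$ greater than $C_K$, and suppose $P\in E(L)$ is torsion. It suffices to show $E(L)[p^\infty]=E(K)[p^\infty]$ for every prime $p$. For a point $P$ of order $p^n$, the degree $[K(P):K]$ divides $[L:K]$, and it also divides $\left|\operatorname{GL}_2(\bbz/p^n)\right|$. The prime divisors of the latter lie among $p$ and those of $p-1$ and $p+1$.
\begin{itemize}
\item \emph{Small $p$.} If $p+1<C_K$, then $[K(P):K]$ has no prime factor exceeding $C_K$, so it equals $1$ and $P\in E(K)$.
\item \emph{Large $p$.} Here I use Merel's bound over quadratic fields. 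Taking $C_K$ larger than it, $E(K)$ has no $p$-torsion, so it is enough to show $E(L)[p]=0$. If $E(L)[p]=0$, then $E(L)[p^\infty]=0$ as well.
\end{itemize}

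\textbf{Large $p$: orbit analysis.} Take $0\neq P\in E(L)[p]$, and let $G\subseteq\GLp$ be the image of the mod-$p$ representation. Then $[K(P):K]$ is the size of the $G$-orbit of $P$. By the choice of $C_K\ge N_K$, $G$ fixes no line, so $G$ acts irreducibly. By Dickson's classification, $G$ then contains $\operatorname{SL}_2(\Fp)$, or lies in the normalizer of a Cartan subgroup but not in the Cartan itself, or has projective image $A_4$, $S_4$ or $A_5$. The goal is to show that in each case the orbit of a nonzero vector has size divisible by $2$, $3$ or $5$. Taking $C_K\ge5$ then gives a contradiction.
\begin{itemize}
\item \emph{Case $G\supseteq\operatorname{SL}_2(\Fp)$.} The orbit contains all $p^2-1$ nonzero vectors up to scalars, so its size is even.
\item \emph{Normalizer-of-Cartan and exceptional cases.} I will use two facts. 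First, $\det G$ is the image of the cyclotomic character, of index at most $2$ in $\Fpx$ because $[K:\bbq]=2$. Second, the stabilizer of a nonzero vector $v$ meets the scalars trivially, and in these cases it has order dividing $2$ (or is trivial). Hence the orbit size is $|G|/|\operatorname{Stab}(v)|$, with $|G|$ divisible by $|\det G|\cdot|\text{projective image}|/\ldots$. I then argue $2$-adically, or $3$-adically in the exceptional cases, that it is divisible by $2$ or $3$.
\end{itemize}

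\textbf{Main obstacle.} I expect the hardest step to be this last one. When $p\equiv3\pmod4$ and $\det G$ has odd order $(p-1)/2$, the scalar contribution is odd. Evenness must then come from the non-Cartan element of the normalizer, whose coset acts by swapping the two eigenlines or by Frobenius on $\Fpp$. I would first show that the stabilizer of $v$ cannot contain the full Sylow $2$-subgroup of $G$. In the exceptional cases I would instead use that the projective orbit of the line $\Fp v$ has size divisible by $3$ or $4$. If this finer group theory becomes delicate, a fallback is to also enlarge $C_K$ so that the relevant small primes are excluded.
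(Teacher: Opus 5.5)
The paper does not prove this statement; it quotes it from \cite{IK24}, so your proposal has to stand on its own. The following parts are fine: the reduction to $p$-power torsion, the small-$p$ step, the use of Momose's theorem for $N_K$ (you only need its unconditional finiteness, not an effective bound), and the $\operatorname{SL}_2(\Fp)$ and exceptional cases.

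The gap is the normalizer-of-Cartan case. Your claim that the determinant condition plus finer group theory forces every orbit to have size divisible by $2$, $3$ or $5$ is false. This holds even with the stronger fact $\det G=\Fpx$, which is valid once $p$ is unramified in $K$, since then $K\cap\bbq(\zeta_p)=\bbq$. Here is a counterexample:
\begin{itemize}
\item Take $p\equiv 3\pmod 4$, and let $S\subseteq\Fpx$ be the squares, of odd order $\frac{p-1}{2}$.
\item Set $H=\{\begin{psmallmatrix}a&0\\0&d\end{psmallmatrix}: a,d\in S\}$, $w=\begin{psmallmatrix}0&1\\1&0\end{psmallmatrix}$ and $G=H\cup wH$.
\item Then $G$ normalizes $\cC_s$ without lying in it.
\item $G$ is irreducible: for $a\in S\setminus\{1\}$ the only lines fixed by $\begin{psmallmatrix}a&0\\0&1\end{psmallmatrix}$ are the two axes, and $w$ swaps them.
\item $\det G=S\cup(-S)=\Fpx$.
\item Yet $v=e_1+e_2$ has stabilizer $\{I_2,w\}$, which is the full Sylow $2$-subgroup of $G$, so the orbit of $v$ has size $\left(\frac{p-1}{2}\right)^2$.
\end{itemize}
This orbit size is prime to $30$ whenever $\gcd(\frac{p-1}{2},15)=1$. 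For a safe prime $p=2q+1$ it equals $q^2$, which has no prime factor $\le C_K$. Analogous examples exist in the normalizer of a non-split Cartan subgroup. So the step you flag as the main obstacle cannot be closed by group theory, and enlarging $C_K$ only makes it worse.

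Your Dickson list also omits $G\subseteq\cC_{ns}$, which is irreducible over $\Fp$. That case is easy: the action on nonzero vectors is free, so the orbit has size $|G|$, which is divisible by $|\det G|=p-1$. But it must be included.

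What is missing is arithmetic input that rules out such images. The natural input is the action of inertia at a prime $v\mid p$, with $p$ unramified in $K$ and $p$ large. Since $|N(\cC)|$ is prime to $p$, the image of inertia is cyclic, generated by the image $\rho(t)$ of a tame generator. Serre's description via fundamental characters, together with $\det\circ\rho=\cyc$, gives two cases.
\begin{itemize}
\item Potentially ordinary or potentially multiplicative reduction at $v$: $\rho(t)$ has distinct eigenvalues $g\zeta^{-1},\zeta$, with $g$ a generator of $\Fpx$ and $\zeta^{12}=1$. So $\rho(t)$ has order $p-1$ and must lie in $G\cap\cC_s$; this situation is impossible inside $N(\cC_{ns})$.
\item Potentially supersingular reduction: $\rho(t)$ has an eigenvalue in $\Fpp\setminus\Fp$ of order $(p^2-1)/c$ with $c\in\{1,2,3,4,6\}$. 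This forces $G\subseteq N(\cC_{ns})$ and $\rho(t)\in G\cap\cC_{ns}$, again of even order, because $24\mid p^2-1$.
\end{itemize}
Hence $|G\cap\cC|$ is even, and every orbit is even. Off the eigenlines an orbit has size $|G\cap\cC|$ or $2|G\cap\cC|$; on an eigenline of a split Cartan it is twice an orbit of $G\cap\cC_s$. With this inserted, for $p$ larger than a constant depending on $K$, your outline goes through. Without an input of this kind, the normalizer case does not close.
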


For a positive integer $n$, we denote the $n$-torsion subgroup $E\left(\overline{K}\right)\left[n\right]$ simply by $E\left[n\right]$. The field extension $K\left(E\left[n\right]\right) / K$ is called the $n$-division field extension of $E/K$.
In \cite{GJLR16}, it is shown that the $n$-division field extension can be abelian only for uniformly bounded values of $n$.

\begin{theorem}[{\cite[Theorem~1.1]{GJLR16}}]\label{thm:GJLR16}
	We let $E/\bbq$ be an elliptic curve defined over $\bbq$, $n \ge 2$ a positive integer, and $\zeta_{n}$ a primitive $n$th root of unity. \begin{itemize}
		\item If $\bbq\left(E\left[n\right]\right) = \bbq\left(\zeta_{n}\right)$, then $n \in \left\{2,3,4,5\right\}$.
		\item If $\Gal\left(\bbq\left(E\left[n\right]\right)/\bbq\right)$ is abelian, then $n \in \left\{2,3,4,5,6,8\right\}$.
	\end{itemize}
\end{theorem}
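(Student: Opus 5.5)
The plan is to translate everything into a statement about the image $G_{n}=\rho_{E,n}\left(\Gal\left(\overline{\bbq}/\bbq\right)\right)\subseteq \operatorname{GL}_{2}\left(\bbz/n\bbz\right)$. I will use two facts throughout. First, the Weil pairing gives $\bbq\left(\zeta_{n}\right)\subseteq\bbq\left(E\left[n\right]\right)$, so $\det\colon G_{n}\to\left(\bbz/n\bbz\right)^{\times}$ is surjective. Second, $G_{n}$ contains the image $c$ of complex conjugation, with $\det c=-1$ and $c^{2}=1$.

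If $\bbq\left(E\left[n\right]\right)/\bbq$ is abelian, then $G_{n}$ is abelian and so lies in the centralizer of $c$. Take an odd prime power $p^{k}\,\|\,n$. Modulo $p^{k}$, $c$ is conjugate to $\operatorname{diag}\left(1,-1\right)$, so its centralizer is a split Cartan subgroup. Hence $E$ has two independent rational cyclic $p^{k}$-isogenies. Composing the dual of one with the other produces a rational cyclic $p^{2k}$-isogeny.

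The same mechanism works across coprime parts. If $m_{1}m_{2}\mid n$ with $\gcd\left(m_{1},m_{2}\right)=1$ and $G_{m_{i}}$ is split-Cartan, then $E$ acquires a rational cyclic $m_{1}^{2}m_{2}^{2}$-isogeny. I then invoke Mazur--Kenku: the degrees of rational cyclic isogenies over $\bbq$ lie in $\left\{1,\dots,19,21,25,27,37,43,67,163\right\}$. This immediately forces $p\in\left\{3,5\right\}$ and $k=1$, since $49$, $81$ and $625$ are excluded. It also forbids $15\mid n$, since $225$ is excluded.

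For the prime $2$, the argument has to be more careful. The abelian subgroups of $\operatorname{GL}_{2}\left(\bbf_{2}\right)\cong S_{3}$ are trivial, $C_{2}$ or $C_{3}$, and $c$ may be trivial modulo $2$. So I will split by the image modulo $2$:
\begin{itemize}
\item If $G_{2}$ is trivial, $E$ has full rational $2$-torsion. This gives a rational cyclic $4$-isogeny, which I combine with the odd part as above. For example, $3\mid n$ would give a cyclic $36$-isogeny, which is impossible.
\item If $G_{2}\cong C_{2}$, $E$ has one rational $2$-torsion point, hence a rational $2$-isogeny. Combined with a split Cartan mod $5$ this gives a cyclic $50$-isogeny, which is excluded, so $10\nmid n$.
\item If $G_{2}\cong C_{3}$, there is no $2$-isogeny. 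In this case $n=6$ survives.
\end{itemize}
For $2$-power levels, I will show that $16\nmid n$ and that $4$ and $8$ cannot be combined with $3$ or $5$. For this I will use the classification of $2$-adic images of Rouse--Zureick-Brown: read off which subgroups of $\operatorname{GL}_{2}\left(\bbz/16\bbz\right)$ that occur with surjective determinant are abelian, and check there are none at level $16$. Mixed levels $4\cdot3$, $8\cdot3$, $4\cdot5$ are then excluded because the image modulo $4$ forces a rational $2$-isogeny or full $2$-torsion, which produces cyclic isogeny degrees $2\cdot 9\cdot\ldots$ or $4\cdot 25$ outside Kenku's list. This leaves $n\in\left\{2,3,4,5,6,8\right\}$.

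For the first bullet, $\bbq\left(E\left[n\right]\right)=\bbq\left(\zeta_{n}\right)$ means $\det$ is an isomorphism $G_{n}\cong\left(\bbz/n\bbz\right)^{\times}$. Equivalently, $G_{n}$ is abelian of order $\varphi\left(n\right)$, so $n$ is in the list above. It remains to exclude $n=6$ and $n=8$. For $n=6$, $G_{2}$ must be $C_{3}$ and $G_{3}$ of order $2$; but $\det$ restricted to the $2$-part is trivial, so surjectivity onto $\left(\bbz/6\bbz\right)^{\times}\cong C_{2}$ must come from mod $3$. I then check, by a small group computation, that such a group cannot have order exactly $\varphi\left(6\right)=2$ while projecting onto $C_{3}$. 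For $n=8$, I would enumerate subgroups of $\operatorname{GL}_{2}\left(\bbz/8\bbz\right)$ of order $4$ on which $\det$ is injective, containing an element of order $2$ with $\det=-1$. I would then check against the Rouse--Zureick-Brown list that none arise from an elliptic curve over $\bbq$. Realizability of $2,3,4,5$ follows from explicit examples.

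The main obstacle is the $2$-primary part, namely excluding $16$ and the mixed levels with $4$ or $8$. Centralizer arguments at $2$ are weak because $c$ can be nearly trivial modulo $2$, so I expect to rely on the explicit $2$-adic image classification rather than a clean conceptual argument.
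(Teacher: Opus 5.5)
The paper does not prove this theorem; it only quotes it from Gonz{\'a}lez-Jim{\'e}nez and Lozano-Robledo, so your proposal has to stand on its own. Your odd-prime analysis is correct: the centralizer of complex conjugation modulo $p^{k}$ is a split Cartan subgroup, this gives a cyclic $p^{2k}$-isogeny on an isogenous curve, and Kenku's list finishes the argument.

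\textbf{Main gap: $n=10$.} Your case analysis at $2$ never excludes $n=10$. Your only tool against $10\mid n$ is a rational $2$-isogeny that upgrades the $25$-isogeny to a $50$-isogeny. In the case $G_{2}\cong C_{3}$, which you explicitly keep alive, there is no such isogeny. Since $G_{10}\hookrightarrow G_{2}\times G_{5}$, abelianness gives nothing further, and Kenku's list cannot see this case. Yet $10$ is missing from your final list, so an extra idea is needed. A local argument works:
\begin{enumerate}[(1)]
\item $\bbq(E[2])$ is a cyclic cubic field, so it is ramified at some prime $q$ with $q=3$ or $q\equiv 1 \pmod 3$. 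In particular $q\ne 2,5$.
\item Inertia at $q$ acts on $E[2]$ with order $3$. This is impossible for potentially multiplicative reduction, where inertia acts on $E[2]$ unipotently. So $E$ has potentially good reduction at $q$.
\item Then $I_{q}$ acts on $E[2]$ and $E[5]$ through the same finite group, faithfully on $E[5]$.
\item An element of order $3$ in this group acts on $E[5]$ with determinant $1$, since the cyclotomic character is unramified at $q$. Its eigenvalues are therefore primitive cube roots of unity, which lie outside $\bbf_{5}$.
\item This contradicts $G_{5}$ being contained in a split Cartan subgroup.
\end{enumerate}

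\textbf{Second issue: $12\mid n$.} A rational $2$-isogeny combined with the split Cartan image modulo $3$ only gives a cyclic $18$-isogeny, and $18$ is on Kenku's list. You need a cyclic $4$-isogeny. Your unproved claim that an abelian $G_{4}$ forces rational $2$-torsion also needs justification. Both come from the shape of complex conjugation modulo $4$. It is conjugate either to $\operatorname{diag}(1,-1)$ or to $\begin{psmallmatrix}0&1\\1&0\end{psmallmatrix}$.
\begin{itemize}
\item The centralizer of $\operatorname{diag}(1,-1)$ in $\operatorname{GL}_{2}(\bbz/4\bbz)$ is trivial modulo $2$. This gives full rational $2$-torsion, hence a cyclic $4$-isogeny on a $2$-isogenous curve.
\item The centralizer of $\begin{psmallmatrix}0&1\\1&0\end{psmallmatrix}$ stabilizes a cyclic subgroup of order $4$.
\end{itemize}
Either way you obtain a $36$-isogeny. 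Group theory alone does not suffice here. For $g=\begin{psmallmatrix}0&1\\-1&-1\end{psmallmatrix}$, the group $\langle g, I_{2}+2g\rangle\subseteq\operatorname{GL}_{2}(\bbz/4\bbz)$ is abelian, has surjective determinant, contains the involution $I_{2}+2g$ of determinant $-1$, and reduces onto $C_{3}$.

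\textbf{Third issue: $n=6$ in the first bullet.} Your claim that $G_{2}\cong C_{3}$ is unfounded. In the abelian case, $G_{2}\cong C_{2}$ is compatible with Kenku, since it only gives degree $18$. Moreover, $\left|G_{6}\right|=\varphi(6)=2$ forces $G_{6}=\{I_{2},c\}$, so $\left|G_{2}\right|\le 2$ --- the opposite of what you assert. A direct fix: $\bbq(E[6])=\bbq(\sqrt{-3})$ would put $\bbz/6\bbz\times\bbz/6\bbz$ inside $E(\bbq(\sqrt{-3}))$. This is excluded by the Kamienny--Kenku--Momose classification of torsion over quadratic fields.
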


Allen and Genao partially generalized \cite[Theorem~1]{GJLR16} by proving the following result concerning the prime $p$-division field extension $K\left(E\left[p\right]\right)$ over a number field $K$ without RCM.

\begin{theorem}[{\cite[Theorem~3]{AG25}}]\label{thm:AG3_2}
	Assuming GRH, a number field $K$ without RCM has the following property: \begin{equation}\label{eqn:property}
		\begin{aligned}
			&\text{there exists a constant } C_{K}>0 \text{ depending only on } K \text{ such that,}\\
			&\text{for any elliptic curve } E/K \text{ defined over } K \text{ and any prime } p > C_{K}\\
			&\Gal\left(K\left(E\left[p\right]\right)/K\right) \text{is not abelian.}
		\end{aligned}
	\end{equation}
\end{theorem}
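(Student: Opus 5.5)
The plan is to prove the property \eqref{eqn:property} directly from the structure of abelian subgroups of $\GLp$, combined with the theory of isogeny characters. Only the last step uses more than standard facts.

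Fix $E/K$ and a prime $p$ unramified in $K$. Let $G=\bar\rho_{E,p}\left(\Gal\left(\overline{K}/K\right)\right)\subseteq\GLp$ be the image of the mod-$p$ representation, and assume $G$ is abelian. By the Weil pairing, $\det\bar\rho_{E,p}=\cyc$ is the mod-$p$ cyclotomic character. Since $[K:\bbq]$ is fixed, its image has index bounded in terms of $K$ in $\Fpx$. An abelian subgroup of $\GLp$ lies in one of the following:
\begin{enumerate}[(i)]
\item a group $\Fpx\cdot U$ of scalars times a unipotent group, when $p\mid\left|G\right|$;
\item a split Cartan subgroup;
\item a nonsplit Cartan subgroup.
\end{enumerate}
I treat these three cases separately. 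In each case the aim is to force $p\le C_{K}$ unless $K$ has RCM.

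Cases (i) and (ii) both produce a $K$-rational $p$-isogeny, with isogeny character $\psi$ and complementary character $\cyc\psi^{-1}$.

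I will invoke the Larson--Vaintrob classification of isogeny characters. It says that for $p$ larger than a constant depending on $K$, $\psi^{12}$ equals $1$, equals $\cyc^{12}$, or is of the form coming from a CM curve whose CM field is contained in $K$. The last alternative requires $K$ to contain the CM field of some CM curve, i.e.\ rationally defined CM. As I recall, that CM field must in addition split the prime $p$, which GRH (Chebotarev) is used to rule out; here the hypothesis that $K$ has no RCM suffices to exclude it.

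\emph{Case (i).} Both diagonal characters coincide, so $\psi^{2}=\cyc$ and $\psi^{12}=\cyc^{6}$. For $p$ large this is neither $1$ nor $\cyc^{12}$, because $\cyc$ has order at least $(p-1)/[K:\bbq]$. Hence case (i) cannot occur.

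\emph{Case (ii).} Both diagonal characters are isogeny characters. After swapping them we may assume $\psi_{1}^{12}=1$; if both were trivial or both equal to $\cyc^{12}$, this would contradict $\psi_{1}\psi_{2}=\cyc$. Then $E$ acquires a nontrivial $p$-torsion point over an extension of $K$ of degree dividing $12$. Merel's uniform boundedness theorem, applied in degree $\le 12[K:\bbq]$, then bounds $p$.

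\emph{Case (iii).} This is the main obstacle. After extending scalars to $\Fpp$, the representation splits as $\varphi\oplus\varphi^{p}$ for a character $\varphi\colon\Gal\left(\overline{K}/K\right)\to\Fppx$ with $\varphi^{1+p}=\cyc$. I would run the Larson--Vaintrob argument with $\Fpp$-coefficients:
\begin{itemize}
\item Restrict to inertia at the primes above $p$, using Raynaud's description of the tame inertia action for semistable reduction (after a bounded base change). This shows that $\varphi^{12}$ on inertia is a product of powers of the level-$2$ fundamental characters, with exponents from a finite list.
\item Evaluate $\varphi^{12}$ at Frobenius elements $\mathrm{Frob}_{\mathfrak q}$ for a fixed finite set of small primes $\mathfrak q$ of $K$, chosen in advance and independent of $p$ and $E$. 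Via class field theory this yields congruences modulo a prime above $p$ between $\varphi^{12}(\mathrm{Frob}_{\mathfrak q})$ and products of Frobenius eigenvalues of the reductions $E_{\mathfrak q}$, which are algebraic integers of bounded height.
\item Conclude that for $p$ large these congruences are equalities. The exponent pattern $(1,0)$ of a genuinely level-$2$ character then forces the Frobenius eigenvalues to lie in a single imaginary quadratic field $F$ with $F\subseteq K$. This yields a CM curve with CM rationally defined over $K$, contradicting the hypothesis.
\end{itemize}
The alternative exponent patterns $(0,0)$ and $(1,1)$ make $\varphi^{12}$ trivial or equal to $\cyc^{12}$ on a bounded-index subgroup. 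This again gives $p$-torsion (or its twist by the Weil pairing) over a bounded-degree extension, so Merel applies.

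The delicate points are making the finite choice of auxiliary primes $\mathfrak q$ uniform in $E$, and handling potentially additive reduction via a bounded base change. I would follow the strategy of Larson--Vaintrob and Genao closely at these steps.
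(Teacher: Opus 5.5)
There is a genuine gap, and it sits exactly where GRH has to enter: your argument never uses GRH at all. The root cause is that \cite[Theorem~1]{LV14} is misquoted. For $p\notin S_{K}$ and $E[p]$ reducible over $\overline{\bbf}_{p}$ with isogeny character $\psi$, the theorem has two alternatives. Alternative (1): $\psi^{12}=\psi'^{12}$ for an isogeny character $\psi'$ of a CM curve over $K$ whose CM field lies in $K$. Alternative (2): GRH fails for $K(\sqrt{-p})$ and $\psi^{12}=\cyc^{6}$. The possibilities $\psi^{12}\in\{1,\cyc^{12}\}$ are already absorbed into $S_{K}$. The no-RCM hypothesis kills (1); GRH kills (2), not the CM alternative. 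In your case~(i) you have $\psi^{12}=\cyc^{6}$, which is literally alternative (2), so dismissing it silently uses GRH. Case~(i) can in fact be excluded elementarily. By Lemma~\ref{lem:absub_not_Cartan} the determinant of such a group consists of squares, whereas $\cyc$ is surjective for $p$ unramified in $K$. Merely bounded index, which is all you state, would not suffice.

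The serious problem is case~(iii). It is an outline for re-proving Larson--Vaintrob with $\Fpp$-coefficients, not a proof. The outline also omits the ``half'' inertia pattern from potentially supersingular reduction, where $\varphi^{12}=\cyc^{6}$. For that pattern, the lifted congruence at an auxiliary prime $\mathfrak q$ becomes $\alpha_{\mathfrak q}^{12}=N(\mathfrak q)^{6}$. This holds whenever $a_{\mathfrak q}=0$, so it gives no contradiction by itself. A finite set of auxiliary primes fixed in advance, independent of $p$ and $E$, therefore cannot exclude it for all large $p$. Larson--Vaintrob exclude it only by choosing an auxiliary prime of norm small relative to $p$ with prescribed splitting in $K(\sqrt{-p})$. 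That step needs effective Chebotarev, i.e.\ GRH.

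The missing observation is that \cite[Theorem~1]{LV14} already applies whenever $E[p]$ is reducible over $\overline{\bbf}_{p}$. Every Cartan subgroup, split or non-split, is diagonalizable over $\Fpp$, so case (iii) needs no new argument. As in the paper, take $p$ large, so that $p\notin S_{K}$ and $p$ is unramified in $K$. Lemma~\ref{lem:absub_not_Cartan} and surjectivity of $\cyc$ put $G_{E/K,p}$ in a Cartan subgroup, and then \cite[Theorem~1]{LV14} applies. No RCM excludes (1) and GRH excludes (2). No case split and no appeal to Merel is needed. The paper goes further and excludes (2) without GRH. It combines the abelian hypothesis with the inertia element of Lemma~\ref{lem:large_p}, which forces $p-1\mid 6e$.
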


The main purpose of this paper is to prove, without assuming GRH, the equivalence between {\normalfont \textbf{Property}}~\eqref{eqn:property} and the absence of RCM.

\begin{theorem}[{}]\label{thm:main}
	$K$ has {\normalfont \textbf{Property}}~\eqref{eqn:property} if and only if $K$ does not have RCM.
\end{theorem}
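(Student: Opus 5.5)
We prove the two implications separately. The forward direction (Property~\eqref{eqn:property} implies no RCM) is elementary. The converse is where GRH has to be removed. The point is that an abelian image is far more rigid than a Borel image: it gives us \emph{two} characters, or one $\mathbb{F}_{p^{2}}^{\times}$-valued character, instead of a single isogeny character.

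\emph{Property~\eqref{eqn:property} $\Rightarrow$ no RCM.} We argue by contraposition. Suppose $K$ has RCM, and take $E/K$ with $\End_{K}(E)=\mathcal{O}$, an order in an imaginary quadratic field $F\subset K$. For every prime $p$ not dividing $\operatorname{disc}(\mathcal{O})$, the group $E[p]$ is a free $\mathcal{O}/p\mathcal{O}$-module of rank $1$. Since $G_{K}$ commutes with $\mathcal{O}$, the mod-$p$ representation lands in $\operatorname{Aut}_{\mathcal{O}/p}(E[p])=(\mathcal{O}/p\mathcal{O})^{\times}$, which is abelian. Hence $\Gal(K(E[p])/K)$ is abelian for infinitely many $p$, and Property~\eqref{eqn:property} fails.

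\emph{No RCM $\Rightarrow$ Property~\eqref{eqn:property}.} Fix $E/K$ and a large prime $p$ with $G=\Im\bar\rho_{E,p}$ abelian. We take $p$ unramified in $K$ and $p>C([K:\bbq])$. Then $\det=\cyc$ has image $\Fpx$, and we may pass to the extension $K'=K(E[12])$, which has degree at most $|\mathrm{GL}_{2}(\mathbb{Z}/12)|$. Over $K'$, $E$ is semistable, and the ramification index of every place above $p$ is bounded in terms of $[K:\bbq]$. The abelian subgroups of $\GLp$ with surjective determinant fall into three types:
\begin{enumerate}
\item[(a)] $G$ lies in a split Cartan;
\item[(b)] $G$ lies in a non-split Cartan;
\item[(c)] $G$ lies in $Z\cdot U$ (scalars times unipotent), so that the two diagonal characters coincide.
\end{enumerate}

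Case (c) is ruled out by tame inertia at $v\mid p$. Serre--Raynaud says the diagonal characters restricted to inertia are $\theta^{a},\theta^{b}$ with $\{a,b\}=\{0,e\}$, where $\theta$ is the fundamental character and $e$ is bounded. Equality of the two characters would then force $e\equiv 0$ or $2a\equiv e \pmod{p-1}$, which is impossible for large $p$. Case (a) gives two independent $p$-isogeny characters $\lambda_{1},\lambda_{2}$ with $\lambda_{1}\lambda_{2}=\cyc$. On inertia, again $\{\lambda_{1},\lambda_{2}\}|_{I_{v}}=\{1,\theta^{e}\}$ at each $v\mid p$. So $\lambda_{1}^{12}$ is unramified away from $p$ and has a prescribed, GRH-free \emph{type} at every place above $p$.
\begin{itemize}
\item If the type is trivial at all $v\mid p$, then $\lambda_{1}^{12}$ has bounded order. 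Then $E$ (or a twist of bounded degree) acquires a $K''$-rational point of order $p$ over a field of bounded degree, contradicting Merel.
\item Otherwise we use the standard class-field-theory computation of Momose and David. For a fixed small prime $\mathfrak q$ of good reduction, the value $\lambda_{1}^{12h}(\mathrm{Frob}_{\mathfrak q})$ is congruent mod $\mathfrak p$ to an explicit algebraic number $\gamma_{\mathfrak q}$ built from a generator of $\mathfrak q^{h}$. It is also a root of $x^{2}-a_{\mathfrak q}^{\ast}x+N\mathfrak q^{12h}$, where $a_{\mathfrak q}^{\ast}$ is the trace of $\mathrm{Frob}_{\mathfrak q}^{12h}$, which is bounded independently of $E$. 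Because $\lambda_{2}$ gives the \emph{other} root simultaneously, for $p$ large the congruence becomes an equality of algebraic numbers. This forces both Frobenius eigenvalues into an imaginary quadratic field $F\subset K$, and forces $\mathfrak q$ to factor as a principal ideal from $F$.
\end{itemize}
Doing this for the finitely many $\mathfrak q$ up to a bound, which needs no GRH because we have freedom in the choice of $\mathfrak q$ once both characters are known, shows that $K$ contains the ring class field of the relevant order. Hence a CM curve with $\End_{K}=\mathcal O$ exists, that is, $K$ has RCM. This is a contradiction. Case (b) is handled identically, replacing $(\lambda_{1},\lambda_{2})$ by the $\mathbb{F}_{p^{2}}^{\times}$-valued character $\varepsilon$ and its conjugate, with inertia type $\theta_{2}^{e}$ of level~$2$.

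The main obstacle is the last step in cases (a)/(b): turning the congruence into an equality and then into ``$K$ contains a ring class field''. This is exactly where Larson--Vaintrob needed GRH, in the single-isogeny situation, to find a small auxiliary prime. I would first try to show that abelianness supplies the second character and fixes the inertia type uniquely, so that the bad ``$\cyc^{6}$'' type cannot occur. Any bounded prime $\mathfrak q$ of good reduction would then suffice, with constants depending only on $K$.
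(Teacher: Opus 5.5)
There is a genuine gap in the converse direction. Your forward direction is fine.

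The gap sits exactly at the step you flag as the main obstacle: excluding the type $\psi^{12}=\cyc^{6}$. This is the only case in which Larson--Vaintrob need GRH. In case (a) you assert $\{\lambda_{1},\lambda_{2}\}|_{I_{v}}=\{1,\theta^{e}\}$ at every $v\mid p$ and attribute it to Serre--Raynaud. That description only holds for (potentially) ordinary or multiplicative reduction. Under potentially good supersingular reduction with semistability defect $e'\in\{3,4,6\}$, the formal group can have a canonical subgroup. This gives a Galois-stable line whose inertia exponent lies strictly between $0$ and $e'$. That is where the Momose--David types $\theta^{4},\theta^{6},\theta^{8}$ come from. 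The type $\theta^{6}$ (e.g.\ $e'=4$, $p\equiv 3 \pmod 4$) is precisely the $\cyc^{6}$ type. So your inertia description assumes the bad case away, and your last paragraph concedes that showing abelianness fixes the type is still to be done.

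The mechanism you offer for avoiding GRH also does not work. In any Borel image the second diagonal character is $\cyc\lambda_{1}^{-1}$, so ``$\lambda_{2}$ gives the other root'' adds no information. For a fixed $\mathfrak q$ the congruence becomes an equality for large $p$ anyway. In the $\cyc^{6}$ type that equality reads $\pi_{\mathfrak q}^{12h}=N\mathfrak q^{6h}$, and it genuinely holds whenever $E$ has supersingular reduction at $\mathfrak q$. Since $E$ varies, no $p$-independent choice of $\mathfrak q$ rules it out, which is why Larson--Vaintrob need GRH to produce a small auxiliary prime depending on $p$. Case (b), ``handled identically'', is likewise only asserted.

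Your instinct that abelianness should kill the $\cyc^{6}$ type is right. The missing piece is a rigorous way to do it, and the paper's route avoids re-deriving the Momose--David analysis altogether. It uses \cite[Theorem~1]{LV14} as a black box: for $p\notin S_{K}$ with $E[p]$ reducible over $\overline{\Fp}$, that theorem unconditionally gives one of two outcomes.
\begin{enumerate}
\item[(1)] A CM curve over $K$ with CM field in $K$; this is excluded since $K$ has no RCM.
\item[(2)] $\psi^{12}=\cyc^{6}$ with $E[p]$ reducible over $\Fp$.
\end{enumerate}
The remaining cases then close quickly.
\begin{itemize}
\item Your case (c) disappears at once by Lemma~\ref{lem:absub_not_Cartan}, since $\det(aU^{b})=a^{2}$ contradicts $\Im\cyc=\Fpx$; no inertia argument is needed.
\item Outcome (2) already excludes the non-split Cartan.
\item In the split case, $\psi_{1}\psi_{2}=\cyc$ and $\psi_{1}^{12}=\cyc^{6}$ give $\psi_{1}^{6}=\psi_{2}^{6}$. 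Lemma~\ref{lem:large_p}, from Lozano-Robledo's description of inertia at an unramified $p\ge 7$, produces an element of $G_{E/K,p}$ with eigenvalues $1$ and $\alpha^{e}$, $e\in\{1,2,3,4,6\}$. Hence $(p-1)\mid 6e$, which is impossible for $p\ge 41$.
\end{itemize}
That local statement is the rigorous replacement for your unproved inertia-type claim. Separately, passing to $K(E[12])$ would make class numbers and Minkowski bounds depend on $E$, so any global class-field-theory step would have to be carried out over $K$.
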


As an incidental consequence of Lemma~\ref{lem:absub_not_Cartan}, which is a key ingredient in the proof of Theorem~\ref{thm:main}, we also obtain Theorem~\ref{thm:Cartan}. Theorem~\ref{thm:Cartan} can be viewed as complementary to \cite[Theorem~3]{AG25}: while both results have a similar structure, they are optimized for different classes of base number fields. We also recall another part of \cite[Theorem~3]{AG25} for comparison with Theorem~\ref{thm:Cartan}.

\begin{theorem}[{\cite[Theorem~3]{AG25}}]\label{thm:AG3_1}
	We let $E/K$ be an elliptic curve defined over a number field $K$ and $\Delta_{K}$ the discriminant of the extension $K/\bbq$. For a prime $p \ge 17$ satisfying either $p \nmid \Delta_{K}$ or $p > \left(6\left[K:\bbq\right]\right)!+ 1 $, if $\Gal\left(K\left(E\left[p\right]\right)/K\right)$ is abelian, then the image of the mod-$p$ Galois representation is contained in a Cartan subgroup of $\GLp$.
\end{theorem}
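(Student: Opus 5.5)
The plan is to classify abelian subgroups $G \subseteq \GLp$ and then show, with Galois-theoretic input, that only the Cartan case can occur. Let $G$ be the image of the mod-$p$ representation $\bar\rho_{E,p}$, and assume $G$ is abelian.

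\textbf{Step 1 (group theory).} Suppose $G$ contains a non-scalar semisimple element $g$. The centralizer of $g$ in $\GLp$ is the Cartan subgroup $\Fp[g]^{\times}$, which is split or nonsplit. Since $G$ is abelian, $G$ lies in this centralizer, and we are done. Otherwise every element of $G$ is either scalar or of the form $aI+N$ with $N\neq 0$ nilpotent. A non-scalar element of the second form generates a commutative algebra $\Fp[N]$, and its centralizer is $\left\{aI+bN\right\}$. So $G \subseteq \left\{aI + bN : a\in\Fpx,\ b \in \Fp\right\}$. Since $p \ge 17$ is odd, this disposes of any worry about exotic abelian lifts of the Klein four-group in $\operatorname{PGL}_2$: two commuting lifts would share a Cartan, which is impossible. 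It therefore remains to rule out this ``scalar times unipotent'' case whenever $G$ is not scalar. If $G$ is scalar, it is already contained in a Cartan subgroup.

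\textbf{Step 2 (the unipotent case forces $p$ to ramify in $K$).} In this case there is a character $\chi : G_K \to \Fpx$ with $\bar\rho_{E,p} \sim \begin{pmatrix} \chi & * \\ 0 & \chi\end{pmatrix}$. Taking determinants gives $\chi^{2} = \cyc$. Hence the quadratic character $\cyc^{(p-1)/2} = \chi^{p-1}$ is trivial on $G_K$. This means $\sqrt{p^{*}} \in K$ with $p^{*}=(-1)^{(p-1)/2}p$, so $p$ ramifies in $K$ and $p \mid \Delta_K$. This already settles the hypothesis $p\nmid\Delta_K$.

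\textbf{Step 3 (the case $p > (6[K:\bbq])!+1$).} This is the main obstacle. Here $\chi$ is an isogeny character, since the line $\ker N$ is $G_K$-stable and gives a $K$-rational $p$-isogeny. I would pass to the Galois closure $M$ of $K$, of degree at most $[K:\bbq]!$. Then I would apply the Momose / Larson--Vaintrob analysis to $\chi^{12}$. On inertia at each $v \mid p$ of $M$, Raynaud's description gives $\chi^{12}|_{I_v} = \cyc^{a_v}$ with $a_v \in \left\{0,4,6,8,12\right\}$ times small multiplicities. Globally, $\chi^{12}$ corresponds to a ``signature'' determined up to an element of the class group and units.

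The relation $\chi^{12}=\cyc^{6}$ pins down the signature as the ``half'' signature. I would then evaluate at Frobenius of small auxiliary primes $\mathfrak q$ of good reduction. From $\operatorname{tr}\bar\rho(\operatorname{Frob}_{\mathfrak q}) \equiv 2\chi(\operatorname{Frob}_{\mathfrak q})$ and $\chi^2 = \cyc$ one gets $a_{\mathfrak q}^{2} \equiv 4N\mathfrak q \pmod p$. By Hasse, $|a_{\mathfrak q}^2 - 4N\mathfrak q| \le 4N\mathfrak q$. Any nonzero value is therefore a bounded integer divisible by $p$. The bound $(6[K:\bbq])!+1$ should come from controlling $N\mathfrak q$ and the relevant exponents after raising to the $12[M:K]$-th power uniformly in $E$, via the Galois closure.

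The delicate point is excluding $a_{\mathfrak q}^{2} = 4N\mathfrak q$ for all such $\mathfrak q$. That would make Frobenius act as a scalar times unipotent on the full Tate module, i.e.\ a supersingular-type degeneracy. I would first try to rule this out by using the half-signature to produce an algebraic integer $\alpha$ with $\alpha\bar\alpha = N\mathfrak q$ and $(\alpha - \bar\alpha)^2 \equiv 0 \pmod{p}$, with $\alpha - \bar\alpha \neq 0$ of small norm. For $p$ beyond the stated factorial bound this is impossible, which gives the contradiction.
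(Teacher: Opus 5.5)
Your Steps~1 and~2 are correct, and they are the argument the paper uses for Theorem~\ref{thm:Cartan}. The centralizer dichotomy ($\Fp[g]^{\times}$ for a non-scalar semisimple $g$, otherwise $G\subseteq\Fp[N]^{\times}$) is a quicker route to Lemma~\ref{lem:absub_not_Cartan} than the paper's order/Sylow computation. The deduction $\chi^{2}=\cyc\Rightarrow\sqrt{p^{*}}\in K$ is the paper's Weil-pairing step. Together they settle the branch $p\nmid\Delta_{K}$, and in fact every $p\ge 3$ with $\sqrt{p^{*}}\notin K$.

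The paper gives no proof of the other branch, $p\mid\Delta_{K}$ with $p>(6[K:\bbq])!+1$; it only quotes \cite[Theorem~3]{AG25}. Your Step~3 does not fill it. It lists ingredients but never derives a bound, and its mechanism cannot give a bound depending only on $[K:\bbq]$.

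\textbf{Where Step~3 fails: the degenerate case you flag.}
\begin{itemize}
\item From $a_{\mathfrak q}^{2}\equiv 4N\mathfrak q\pmod p$ and Hasse you get $p\le 4N\mathfrak q$ only when $a_{\mathfrak q}^{2}\neq 4N\mathfrak q$. That is automatic only if $\mathfrak q$ has odd residue degree.
\item If $N\mathfrak q=q^{f}$ with $f$ even, supersingular reduction with $a_{\mathfrak q}=\pm 2q^{f/2}$ makes $\operatorname{Frob}_{\mathfrak q}$ act on $E[p]$ as the scalar $\pm q^{f/2}$. This has exactly the required shape, with $\chi(\operatorname{Frob}_{\mathfrak q})^{2}=N\mathfrak q$.
\item In that case the Frobenius eigenvalue is rational, $\alpha=\bar\alpha$. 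So your proposed way out, via some $\alpha-\bar\alpha\neq 0$ of small norm, assumes what it should prove.
\end{itemize}

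\textbf{Why no choice of auxiliary prime is uniform in the degree.} A field of fixed degree need not have any odd-degree prime of small norm. Take $K=\bbq(\sqrt{p^{*}},\sqrt{D})$, where $D>X$ is a prime with $D\equiv 5\pmod 8$, chosen by CRT and Dirichlet so that every prime $q<X$, $q\neq p$, is inert in $\bbq(\sqrt{D})$. Then every prime of $K$ not above $p$ of norm $<X$ has residue degree $2$, and $X$ is arbitrary. So no choice of $\mathfrak q$ gives a bound in terms of $[K:\bbq]=4$.

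\textbf{Why the signature machinery does not rescue it.}
\begin{itemize}
\item The exceptional sets of Momose and Larson--Vaintrob depend on $K$ itself (class group, units, discriminant), not on $[K:\bbq]$.
\item The half signature $\chi^{12}=\cyc^{6}$ that you say gets pinned down is exactly the exceptional alternative \cite[Theorem~1~(2)]{LV14}. That is the case \cite{AG25} could exclude only under GRH.
\item The paper escapes that case in Theorem~\ref{thm:main} only through Lemma~\ref{lem:large_p}, which needs $p$ unramified in $K$. Your own Step~2 shows $p$ ramifies here.
\item For the same reason, the exponent list $\{0,4,6,8,12\}$ you quote, which is the $e_{v}=1$ case, does not apply.
\end{itemize}

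\textbf{Why the local picture at $p$ does not help.} Already for $e_{\mathfrak p}=2$, take good supersingular reduction with the lifted Hasse invariant of valuation $\tfrac12 v(p)$. Then both inertia characters equal the level-one fundamental character $\theta$ of $K_{\mathfrak p}$, which is consistent with $\chi^{2}=\cyc$.

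Step~3 therefore needs an input that is uniform over all number fields of a given degree, and nothing in your sketch supplies it. As it stands, you should either cite \cite[Theorem~3]{AG25} for this branch or give such an argument.
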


\begin{theorem}[{}]\label{thm:Cartan}
	We let $K$ be a number field and $p \ge 3$ a prime satisfying $\sqrt{\left(-1\right)^{\frac{p-1}{2}}p} \notin K$. For any elliptic curve $E/K$ defined over $K$, if $\Gal\left(K\left(E\left[p\right]\right)/K\right)$ is abelian, then the image of the mod-$p$ Galois representation is contained in a Cartan subgroup of $\GLp$.
\end{theorem}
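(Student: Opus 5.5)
The plan is to combine the Weil pairing with an elementary description of the abelian subgroups of $\GLp$. Let $G \subseteq \GLp$ be the image of the mod-$p$ representation $\bar\rho_{E,p}$, so $G \cong \Gal\left(K\left(E\left[p\right]\right)/K\right)$. We assume $G$ is abelian and show that $G$ lies in a Cartan subgroup. The argument splits into two cases, according to whether $G$ contains an element with distinct eigenvalues.

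\textbf{Case 1: some $g \in G$ has distinct eigenvalues.} These eigenvalues lie either in $\Fp$ or in $\Fpp \setminus \Fp$. In either case $g$ is regular semisimple. Its centralizer in $\GLp$ is the split Cartan subgroup (diagonalize $g$ over $\Fp$) or the nonsplit Cartan subgroup $\Fp[g]^{\times} \cong \Fppx$. Since $G$ is abelian, $G$ is contained in this centralizer, and so $G$ lies in a Cartan subgroup.

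\textbf{Case 2: every $g \in G$ has a repeated eigenvalue.} This is the case we must rule out using the hypothesis on $K$. The characteristic polynomial $x^{2} - \operatorname{tr}(g)x + \det(g)$ then has a double root $\lambda = \operatorname{tr}(g)/2$. Because $p$ is odd, $\lambda \in \Fp$. Hence $\det(g) = \lambda^{2} \in \left(\Fpx\right)^{2}$ for every $g \in G$, that is, $\det(G) \subseteq \left(\Fpx\right)^{2}$.

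To obtain the contradiction we use the Weil pairing, which gives $\det \circ \bar\rho_{E,p} = \cyc$, the mod-$p$ cyclotomic character. Therefore $\det(G)$ is the image of $\Gal\left(K(\zeta_{p})/K\right)$ in $\Gal\left(\bbq(\zeta_{p})/\bbq\right) \cong \Fpx$. This image is the subgroup fixing $K \cap \bbq(\zeta_{p})$.

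The cyclic group $\Fpx$ has $\left(\Fpx\right)^{2}$ as its unique subgroup of index $2$. The fixed field of $\left(\Fpx\right)^{2}$ is the unique quadratic subfield $\bbq\left(\sqrt{\left(-1\right)^{\frac{p-1}{2}}p}\right)$ of $\bbq(\zeta_{p})$. So $\det(G) \subseteq \left(\Fpx\right)^{2}$ holds exactly when $\sqrt{\left(-1\right)^{\frac{p-1}{2}}p} \in K$, which is excluded by hypothesis. Hence Case 2 cannot occur, and $G$ lies in a Cartan subgroup.

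The main point needing care is the group-theoretic step: an abelian subgroup with no regular semisimple element has square determinants. This covers scalar matrices and groups of the form scalar times unipotent, which are the abelian subgroups of a Borel with equal diagonal entries and are not inside any Cartan subgroup. The trace argument handles all of these uniformly, using only that $p$ is odd. I would isolate this step as the key lemma (presumably Lemma~\ref{lem:absub_not_Cartan}): an abelian subgroup of $\GLp$ not contained in a Cartan subgroup has determinant image inside $\left(\Fpx\right)^{2}$.
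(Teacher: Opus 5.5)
Your proof is correct, and its final step is the same as the paper's: the Weil pairing gives $\det G = \Im\cyc$, and the hypothesis on $K$ rules out $\Im\cyc\subseteq(\Fpx)^{2}$, because $(\Fpx)^{2}$ is the subgroup fixing $\bbq\bigl(\sqrt{(-1)^{\frac{p-1}{2}}p}\bigr)$.

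Where you differ is the group theory behind Lemma~\ref{lem:absub_not_Cartan}.

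\textbf{The paper's route.} It splits on whether $p$ divides $\left|G\right|$.
\begin{itemize}
\item If $p\nmid\left|G\right|$, the identity $R^{n}=na^{n-1}R+(1-n)a^{n}I_{2}$ shows that every element with a repeated eigenvalue is scalar. So $G$ consists of commuting diagonalizable matrices and lies in a Cartan subgroup.
\item If $p\mid\left|G\right|$, it uses the unique Sylow $p$-subgroup $\langle U\rangle$. It then argues that $A^{p^{2}}=A$ for elements with distinct eigenvalues in $\Fpp$, and concludes that every $A^{p}$ is scalar. This yields the explicit form $G=\{aU^{b}\}$ and the exact image $\det G=\{a^{2}:a\in H\}$.
\end{itemize}

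\textbf{Your route.} You split on whether $G$ contains a regular semisimple element.
\begin{itemize}
\item If it does, abelianness places $G$ in that element's centralizer, which is a Cartan subgroup.
\item If it does not, the vanishing discriminant $\operatorname{tr}(g)^{2}-4\det g=0$ gives $\det g=(\operatorname{tr}(g)/2)^{2}$ directly, using only that $p$ is odd.
\end{itemize}

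\textbf{What each buys.} Your argument is shorter and more elementary. It avoids Sylow and order considerations altogether, and your second case does not even use that $G$ is abelian. It also suffices for everything the paper needs: Theorem~\ref{thm:Cartan}, and the corresponding step in the proof of Theorem~\ref{thm:main} (via $\Im\cyc=\Fpx$), both use only the implication ``abelian and not in a Cartan subgroup $\Rightarrow$ $\det G\subseteq(\Fpx)^{2}$''. The paper's version additionally gives the explicit structure of the non-Cartan abelian subgroups (scalars times a group of order $p$) and the exact determinant image. Neither theorem requires that extra information.

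One small point: your second case also contains some groups that do lie in a Cartan subgroup, such as groups of scalars. This is harmless, since you exclude that case entirely using the hypothesis on $K$.
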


\begin{remark}
	Compared with \cite[Theorem~3]{AG25}, Theorem~\ref{thm:Cartan} yields a smaller lower bound on $p$ for certain number fields $K$ under which, if the Galois group $\Gal\left(K\left(E\left[p\right]\right) / K\right)$ is abelian, then the image of the mod-$p$ Galois representation is contained in a Cartan subgroup. We now illustrate this comparison with concrete examples.
	
	Assume that the maximal polyquadratic subextension of $K/\bbq$ is $\bbq$. By Theorem~\ref{thm:Cartan}, for every odd prime $p$, if the Galois group $\Gal\left(K\left(E\left[p\right]\right) / K\right)$ is abelian, then the image of the mod-$p$ Galois representation is contained in a Cartan subgroup. By contrast, \cite[Theorem~3]{AG25} requires $p \ge 17$. In particular, if $\left[K:\bbq\right]$ is odd and $\Gal\left(K\left(E\left[p\right]\right) / K\right)$ is abelian, then the image of the mod-$p$ Galois representation is contained in a Cartan subgroup for every odd prime $p$.
	
	We now present another example illustrating this difference. Let $K = \bbq\left(\sqrt{D}\right)$, where $D={\displaystyle\prod_{\text{ prime } \ell \le 12!+1}}\ell$. Again, Theorem~\ref{thm:Cartan} yields the same conclusion for every odd prime $p$, whereas \cite[Theorem~3]{AG25} proves it only for $p>12!+1$.
\end{remark}

In Section~\ref{sec:background}, we review mod-$p$ Galois representations attached to elliptic curves, recall the definition of Cartan subgroups of $\GLp$, and classify the abelian subgroups of $\GLp$. In Section~\ref{sec:proof}, we prove the main theorems~\ref{thm:main}~and~\ref{thm:Cartan}.

\section{Galois representations and abelian subgroups in $\GLp$}\label{sec:background}

\subsection{Mod-$p$ Galois representations attached to an elliptic curve}\label{sec:Gal_repn}

Throughout this paper, we let $K$ be a number field, $E/K$ an elliptic curve defined over $K$, $\Gamma_{K}$ the absolute Galois group of $K$, $p\ge3$ a prime, and $I_{2} \in \GLp$ the identity matrix.

Viewing $E\left[p\right]$ as an $\Fp$-representation of $\Gamma_{K}$, a choice of basis $\cB = \left\{P,Q\right\}$ of the $\Fp$-vector space $E\left[p\right]$ yields the mod-$p$ Galois representation $\rho_{\cB}: \Gamma_{K} \to \GLp$ attached to $E/K$ by $$
	\begin{pmatrix}P^{\sigma}\\Q^{\sigma}\end{pmatrix}
	=
	\rho_{\cB}\left(\sigma\right)
	\begin{pmatrix}P\\Q\end{pmatrix}
$$ for $\sigma \in \Gamma_{K}$.

Via Weil pairing (\cite[Ch.III.Proposition 8.1]{Silverman}), we have that \begin{equation}\label{eqn:Weil}
	\cyc = \det \circ \rho_{\cB}
\end{equation} where $\cyc$ is the mod-$p$ cyclotomic character of $\Gamma_{K}$. For a choice of basis $\cB$, we denote the image of the Galois representation $\rho_{\cB}$ by $G_{E/K,p}$. The statements concerning $G_{E/K,p}$ in Section~\ref{sec:proof} will be independent of the choice of $\cB$.

\subsection{Abelian subgroups in $\GLp$}\label{sec:GLp}

To classify the abelian subgroups in $\GLp$, we recall the standard definitions and basic properties of Cartan subgroups in $\GLp$ (see, for example, \cite[§26.2~Corollary~A~(b)~and~p.125]{H}). A Cartan subgroup is a maximal diagonalizable subgroup over a fixed algebraic closure $\overline{\Fp}$ of $\Fp$. Every Cartan subgroup of $\GLp$ is conjugate over $\Fp$ to exactly one of the following two subgroups $$
	\cC_{s} := \left\{ \begin{psmallmatrix}a&0\\0&d\end{psmallmatrix}: a,d \in \Fpx\right\} \text{ or }
	\cC_{ns} := \left\{ \begin{psmallmatrix}a&b\alpha\\b&a\end{psmallmatrix}: a,b\in \Fp,\left(a,b\right) \ne \left(0,0\right)\right\},
$$ where $\alpha$ is a non-square in $\Fpx$. Cartan subgroups that are conjugate to $\cC_{s}$ are called split. Cartan subgroups that are conjugate to $\cC_{ns}$ are called non-split. By definition, a matrix in a split Cartan subgroup has eigenvalues in $\Fpx$. After extending scalars from $\Fp$ to $\Fpp$, \begin{equation}\label{eqn:Cns}
	\text{ a non-split Cartan subgroup is conjugate to } \left\{ \begin{psmallmatrix}\gamma&0\\0&\gamma^{p}\end{psmallmatrix}: \gamma \in \Fppx\right\}.
\end{equation}

However, a non-split Cartan subgroup does not split over $\Fp$. In other words, a non-split Cartan subgroup is not diagonalizable over $\Fp$.

We classify abelian subgroups in $\GLp$ into two types.

\begin{lemma}\label{lem:absub_not_Cartan}
	We let $p \ge 3$ be a prime and $G \subseteq \GLp$ an abelian subgroup. Then, either \begin{enumerate}[{\normalfont (i)}]
		\item\label{it:ab_Cartan} $G$ is contained in a Cartan subgroup or
		\item\label{it:ab_Borel} $G = \left\{ aU^{b}: a\in H,b\in\Fp \right\}$ for a subgroup $H \subseteq \Fpx$ and a matrix $U$ of order $p$.
	\end{enumerate}
	If \ref{it:ab_Borel} holds, then $\det G = \left\{a^{2}: a \in H\right\}$.
\end{lemma}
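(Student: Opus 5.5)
Our plan is to split into cases according to whether $p$ divides $\left|G\right|$.

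\emph{Case $p \nmid \left|G\right|$.} Every element of $G$ has order prime to $p$. Its minimal polynomial therefore divides $x^{m}-1$ with $p \nmid m$, and so it is separable. Hence each element of $G$ is diagonalizable over $\overline{\Fp}$ (in fact over $\Fpp$). A family of commuting diagonalizable operators is simultaneously diagonalizable, so $G$ is diagonalizable over $\overline{\Fp}$. It is then contained in a maximal diagonalizable subgroup defined over $\Fp$, which is a Cartan subgroup. Concretely we argue as follows. If $G \subseteq \Fpx I_{2}$, then $G \subseteq \cC_{s}$. Otherwise pick $g \in G$ non-scalar, so $g$ has two distinct eigenvalues. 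If these eigenvalues lie in $\Fp$, the centralizer of $g$ in $\GLp$ is the split Cartan subgroup determined by its eigenbasis. If instead they are conjugate elements of $\Fpp \setminus \Fp$, the centralizer of $g$ is $\Fp[g]^{\times} \cong \Fppx$, which is a non-split Cartan subgroup by \eqref{eqn:Cns}. Since $G$ is abelian, $G$ lies in the centralizer of $g$, so \ref{it:ab_Cartan} holds.

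\emph{Case $p \mid \left|G\right|$.} By Cauchy's theorem we pick $U \in G$ of order $p$. Then $U$ is unipotent and not scalar, so $U = I_{2}+N$ with $N \neq 0$ and $N^{2}=0$. In a suitable basis $U = \begin{psmallmatrix}1&1\\0&1\end{psmallmatrix}$. The key computation is that the centralizer of $U$ in $\GLp$ is exactly $\left\{ \begin{psmallmatrix}a&c\\0&a\end{psmallmatrix} \right\} = \left\{ aU^{b} : a \in \Fpx, b \in \Fp \right\}$. This is a direct check: commuting with $N$ forces the matrix to be upper triangular with equal diagonal entries. Since $G$ is abelian, $G$ is contained in this centralizer. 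We set $H = \left\{ a \in \Fpx : aU^{b} \in G \text{ for some } b\right\}$, the image of $G$ under the homomorphism $aU^{b} \mapsto a$. For $a \in H$ we have $aU^{b} \in G$ for some $b$, and since $U \in G$ we get $aU^{b'} \in G$ for every $b'$. Hence $G = \left\{ aU^{b} : a \in H, b \in \Fp\right\}$, and this set is a subgroup because $H$ is. This gives \ref{it:ab_Borel}. Finally, $\det(aU^{b}) = a^{2}$, so $\det G = \left\{ a^{2} : a \in H\right\}$.

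The only delicate point is Case 1: we must show that simultaneous diagonalizability over $\overline{\Fp}$ actually lands $G$ inside an $\Fp$-rational Cartan subgroup. The centralizer argument above handles this directly. It relies on the standard fact that the centralizer of a regular semisimple element is a Cartan subgroup, which we verify in each of the split and non-split cases.

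We also note that the two cases are not exclusive in general. A group of type \ref{it:ab_Borel} with $H$ trivial is not contained in any Cartan subgroup, since Cartan subgroups have order prime to $p$. This is consistent with the "either/or" phrasing of the lemma.
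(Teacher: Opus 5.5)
Your proof is correct. In the case $p \mid |G|$ it takes a genuinely different and more direct route than the paper.

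\textbf{The paper's route.} The paper never computes a centralizer. It uses that $\langle U\rangle$ is the unique Sylow $p$-subgroup (as $p^{2} \nmid |\GLp|$) to write $G = \langle A^{p}, U : A \in G\rangle$. It then shows each $A^{p}$ is scalar by an eigenvalue trick. If both $A$ and $AU$ had distinct eigenvalues in $\Fppx$, then $A^{p^{2}} = A$ and $(AU)^{p^{2}} = AU$. But $(AU)^{p^{2}} = A^{p^{2}}$, which would force $U = I_{2}$. So one of $A$, $AU$ has a repeated eigenvalue $a \in \Fpx$. The identity $R^{n} = n a^{n-1}R + (1-n)a^{n}I_{2}$ at $n = p$ then gives $A^{p} = (AU)^{p} = aI_{2}$.

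\textbf{Your route.} You put $U$ in Jordan form and read off $C_{\GLp}(U) = \{aU^{b}: a \in \Fpx, b \in \Fp\}$. This yields containment and the parametrization in one step. It also makes visible that $G$ lies in the scalars times the unipotent radical of a Borel subgroup.

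\textbf{Comparison.} The paper's argument is basis-free and runs both cases off that single power identity; for $p \nmid |G|$ it uses $n = |R|$ where you use separability of $x^{m}-1$. Yours is shorter and more transparent. In the case $p \nmid |G|$, the paper just appeals to its definition of a Cartan subgroup as a maximal subgroup diagonalizable over $\overline{\Fp}$. Your centralizer argument actually proves that $G$ lands in an $\Fp$-rational Cartan subgroup.

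\textbf{A gap to fill in the non-split case.} The abstract isomorphism $\Fp[g]^{\times} \cong \Fppx$ is not by itself what makes $\Fp[g]^{\times}$ a non-split Cartan subgroup. You should add the following. Since $\Fp[g] \cong \Fpp$, it contains some $h$ with $h^{2} = \alpha I_{2}$, and then $\Fp[h] = \Fp[g]$. This $h$ is $\GLp$-conjugate to $\begin{psmallmatrix}0&\alpha\\1&0\end{psmallmatrix}$, because they share the irreducible characteristic polynomial $x^{2}-\alpha$. Hence $\Fp[g]^{\times}$ is conjugate to $\cC_{ns}$.

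\textbf{Your closing remark.} The remark that the cases are ``not exclusive in general'' is wrong. The two cases are in fact mutually exclusive. Every group of type (ii) contains $U$ of order $p$, while Cartan subgroups have order $(p-1)^{2}$ or $p^{2}-1$, both prime to $p$. Your own example illustrates exactly this.
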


\begin{proof}
	For a matrix $R \in \GLp$ with a repeated eigenvalue $a \in \Fpx$, the result follows from the characteristic polynomial of $R$ and induction on $n \ge 0$: \begin{equation}\label{eqn:induction}
		R^{n} = n a^{n-1} R + \left(1-n\right) a^{n} I_{2}.
	\end{equation} Substituting $n$ by the order $\left|R\right|$ of $R$ into \eqref{eqn:induction}, if $p \nmid \left|R\right|$, then $R = c I_{2}$ for some $c \in \Fp$. Since $R$ has a repeated eigenvalue $a$,
	\begin{equation}\label{eqn:scalar}
		\text{if } p \nmid \left|R\right|, \text{ then } R = a I_{2}.
	\end{equation} Substituting $n= p$ into \eqref{eqn:induction}, \begin{equation}\label{eqn:p_pow}
		R^{p} = a I_{2}.
	\end{equation}
	
	
	We assume $p \nmid \left|G\right|$ and prove \ref{it:ab_Cartan}. Recalling the definition of Cartan subgroups, since $G$ is abelian, it suffices to show that each matrix $A \in G$ is diagonalizable. If $A$ has distinct eigenvalues, there is nothing to prove. If $A$ has a repeated eigenvalue $a$, then by \eqref{eqn:scalar} and $p\nmid \left|G\right|$, we have $A=aI_{2}$.
	
	We assume $p \mid \left|G\right|$ and prove \ref{it:ab_Borel}. By Cauchy's theorem, $G$ contains a matrix $U$ of order $p$. For $G' := \left\langle A^{p}, U: A \in G\right\rangle$, we show that $G = G'$. Obviously, $G \supseteq G'$. Conversely, for each $A \in G$, we show that $A \in G'$. We let $k := \frac{\left|A\right|}{\gcd\left(p,\left|A\right|\right)}$. Since $G$ is abelian and $p^{2} \nmid \left|\GLp\right|$, $\left\langle U \right\rangle$ is the unique Sylow $p$-subgroup and $\gcd\left(k,p\right) = 1$. Since the order $\gcd\left(p,\left|A\right|\right)$ of $A^{k}$ is either $1$ or $p$, we have $A^{k} \in \left\langle U \right\rangle$ and $A \in \left\langle A^{p}, A^{k} \right\rangle \subseteq \left\langle A^{p}, U \right\rangle \subseteq G'$. Since $G$ is abelian, $H := \left\{a \in \Fpx: aI_{2} = A^{p} \text{ for some } A\in G\right\} \subseteq \Fpx$ is a subgroup. We prove that $G' = \left\{ aU^{b}: a\in H,b\in\Fp \right\}$. It suffices to show that, for all $A \in G$, there exists $a \in \Fpx$ such that $A^{p} = a I_{2}$. If both of $A$ and $AU$ have distinct eigenvalues in $\Fppx$, then $A^{p^{2}} = A$ and $A^{p^{2}} = A^{p^{2}} U^{p^{2}} = \left(AU\right)^{p^{2}} = AU$. This contradicts that $U \ne I_{2}$. Hence, at least one of $A$ and $AU$ has a repeated eigenvalue. Since $\left(AU\right)^{p} = A^{p}U^{p} = A^{p}$, by \eqref{eqn:p_pow}, $A^{p} = aI_{2}$ for some $a \in \Fpx$.
	
	If \ref{it:ab_Borel} holds, then $U^{p} = I_{2}$ and $\left(\det U\right)^{p} =1$. Hence, $\det U = 1$ and $\det G = \left\{a^{2}: a \in H\right\}$.
\end{proof}

\section{Proofs of main theorems}\label{sec:proof}

Lemma~\ref{lem:large_p} will be a key ingredient in the proof of Theorem~\ref{thm:main}. For convenience, throughout Section~\ref{sec:proof}, we fix a generator $\alpha$ of $\Fpx$.

\begin{lemma}\label{lem:large_p}
	We let $E/K$ be an elliptic curve defined over $K$ and $p \ge 7$ a unramified prime in $K$. If $G_{E/K,p}$ is contained in a split Cartan subgroup, then $G_{E/K,p}$ contains a matrix with eigenvalues $1$ and $\alpha^{e}$ for some $e\in \left\{1,2,3,4,6\right\}$.
\end{lemma}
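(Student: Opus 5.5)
The plan is to work locally at a prime above $p$ and use Serre's description of the inertia action on $E\left[p\right]$ via the fundamental character; the hypothesis that $p$ is unramified in $K$ is what makes that description explicit. Since $G_{E/K,p}$ lies in a split Cartan subgroup, after a change of basis $\rho_{\cB} = \chi_{1} \oplus \chi_{2}$ for two characters $\chi_{i}: \Gamma_{K} \to \Fpx$. By \eqref{eqn:Weil}, $\chi_{1}\chi_{2} = \cyc$. Fix a prime $v \mid p$ of $K$ and let $I_{v}$ be its inertia group. Because $e\left(v/p\right)=1$, the restriction of $\cyc$ to $I_{v}$ is the fundamental character $\theta := \theta_{p-1}$ of level $1$, which maps the tame inertia onto $\Fpx$. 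It therefore suffices to produce $\tau \in I_{v}$ with $\left\{\chi_{1}(\tau),\chi_{2}(\tau)\right\} = \left\{1,\alpha^{e}\right\}$ for some $e \in \left\{1,2,3,4,6\right\}$. The whole argument is to pin down the $\chi_{i}|_{I_{v}}$ as powers of $\theta$.

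First, I will reduce to semistable reduction. $E$ acquires semistable reduction over an extension $L_{w}/K_{v}$ whose ramification index $e$ can be taken in $\left\{1,2,3,4,6\right\}$. This is the standard fact that the image of inertia on $E\left[\ell\right]$ for an auxiliary $\ell \ge 3$, in the potentially good case, is a finite subgroup of $\operatorname{SL}_{2}$ of this order. In the potentially multiplicative case one may take $e \le 2$. Since $p \ge 7$, $e$ is prime to $p$, so this part of the extension is tame. On $I_{w}$ the restriction of $\theta$ becomes $\theta_{w}^{e}$, where $\theta_{w}$ is the fundamental character of $L_{w}$, and it is still surjective onto $\Fpx$.

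Second, I will apply Serre's classification to $E/L_{w}$ (semistable, with ramification $e$ over $\bbq_{p}$). In the multiplicative and good ordinary cases, $E\left[p\right]$ restricted to $I_{w}$ has semisimplification $1 \oplus \theta_{w}^{e}$. In the good supersingular case the inertia acts through level-$2$ fundamental characters, and its image contains elements of order not dividing $p-1$. That is impossible inside a split Cartan subgroup, whose exponent is $p-1$. Hence the supersingular case does not occur, and we get $\left\{\chi_{1}|_{I_{w}},\chi_{2}|_{I_{w}}\right\} = \left\{1,\theta_{w}^{e}\right\}$.

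Third, I will choose $\tau$. Take $\sigma \in I_{v}$ with $\theta(\sigma) = \alpha$. Then $\tau := \sigma^{e}$ lies, modulo wild inertia (which acts trivially, since its image is a $p$-group inside a split Cartan subgroup), in the tame part coming from $I_{w}$. Thus $\rho_{\cB}(\tau)$ has eigenvalues $1$ and $\theta(\sigma)^{e} = \alpha^{e}$. Equivalently, writing $\chi_{1}|_{I_{v}} = \theta^{a}$ and $\chi_{2}|_{I_{v}} = \theta^{1-a}$, the constraint above forces $ae \equiv 0$ or $ae \equiv e \pmod{p-1}$, which gives the same conclusion. Note that $p\ge 7$ ensures that $\alpha^{e} \ne 1$ for these $e$, although the statement does not even need this.

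The main obstacle is the second step. It requires justifying carefully the inertia weights $\left\{0,e\right\}$ for semistable curves over a base with ramification index $e < p-1$, which is where $p \ge 7$ and the unramifiedness of $p$ both enter, and checking that the passage from $I_{w}$ back to $I_{v}$ via $\sigma^{e}$ is legitimate. I would cite Serre's ``Propriétés galoisiennes'' §1 (Propositions 10–12) together with Raynaud's theorem for the finite flat group scheme $E\left[p\right]$ when $e < p-1$.
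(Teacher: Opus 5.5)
Your route is the paper's route. The paper takes from \cite[Theorem~3.1]{LR13} the dichotomy ``an element with eigenvalues $1,\alpha^{e}$'' versus ``an element with eigenvalue $\beta^{e}$, $\beta$ a generator of $\Fppx$'', and discards the second branch because $\beta^{e}\in\Fpx$ forces $p+1\mid e$. Your order argument is the same thing. Your first step, the ordinary and multiplicative computations, and the passage to $\tau=\sigma^{e}$ are fine.

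The gap is in your second step, where you derive the dichotomy yourself. You assert that good supersingular reduction over $L_{w}$ makes $I_{w}$ act through level-$2$ characters. That is Serre's result for absolute ramification index $1$ \cite{Serre72}. It fails for $e\ge 2$, which is exactly your situation after the semistable base change. Over a ramified base a supersingular curve can have a canonical subgroup:
\begin{itemize}
\item if a lift of the Hasse invariant has valuation $r<p/(p+1)$ (normalised by $v(p)=1$), the Newton polygon of $[p]$ on the formal group breaks at $p$;
\item $E\left[p\right]$ then has a line stable under the decomposition group;
\item $I_{w}$ acts on that line and on the quotient by $\theta_{w}^{e(1-r)}$ and $\theta_{w}^{er}$, with both exponents strictly between $0$ and $e$.
\end{itemize}
Raynaud's theorem only bounds the exponents by $e$, so it permits this. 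These characters have level $1$, so your argument does not exclude them inside a split Cartan subgroup. In this case $\{\chi_{1}|_{I_{w}},\chi_{2}|_{I_{w}}\}\neq\{1,\theta_{w}^{e}\}$, and your third step has nothing to work with.

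This happens with $p$ unramified, and then no $\tau\in I_{v}$ works at all. Over $\bbq_{11}$ take $y^{2}=x^{3}+121x+121$. Here $v(\Delta)=4$ and $v(j)=2$, so $e=3$. Over $\bbq_{11}(\pi)$ with $\pi^{3}=11$, the model $Y^{2}=X^{3}+\pi^{2}X+1$ reduces to the supersingular curve $Y^{2}=X^{3}+1$, and $20\pi^{2}$ lifts its Hasse invariant, so $r=2/3$. Thus $I_{w}$ acts by $\theta_{w}$ and $\theta_{w}^{2}$, hence $I_{v}$ acts by $\theta^{7}$ and $\theta^{4}$. Every element of $\rho(I_{v})$ with eigenvalue $1$ then has other eigenvalue $1$ or $-1=\alpha^{5}$, never $\alpha^{e}$ with $e\in\{1,2,3,4,6\}$.

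A second family matters more. Take $y^{2}=x^{3}+pux+p^{2}w$ with $u,w$ units and $p\equiv 3\pmod 4$. Then $e=4$, $r=1/2$, and the $I_{v}$-characters are $\theta^{a},\theta^{1-a}$ with $4a\equiv 2\pmod{p-1}$. So the character $\chi$ of the stable line satisfies $\chi^{12}=\cyc^{6}$ on $I_{v}$. This is precisely the situation of \cite[Theorem~1~(2)]{LV14} in which the lemma is later applied. For $p=43$ the characters are $\theta^{11},\theta^{32}$, and again every element with eigenvalue $1$ has other eigenvalue $1$ or $-1=\alpha^{21}$.

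So this case cannot be handled by any finer analysis of $I_{v}$; it needs genuinely global input. The paper does not prove this step itself but imports it from \cite[Theorem~3.1]{LR13}. You would therefore have to check that the hypotheses of that result exclude these potentially supersingular, locally reducible curves. Nothing in the lemma's hypotheses visibly excludes them.
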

\begin{proof}
	By \cite[Theorem~3.1]{LR13} and \eqref{eqn:Cns}, for some $e\in \left\{1,2,3,4,6\right\}$, $G_{E/K,p}$ contains either a matrix with eigenvalues $1$ and $\alpha^{e}$ or a matrix with an eigenvalue $\beta^{e}$ where $\beta$ is a generator of $\Fppx$. We suppose that $G_{E/K,p}$ contains either a matrix with an eigenvalue $\beta^{e}$ and lead a contradiction. Since $G$ is contained in a split Cartan subgroup, $\beta^{e} \in \Fpx = \left\langle \beta^{p+1} \right\rangle$ and $p+1$ divides $e$. This contradicts that $p \ge 7$.
\end{proof}

We prove Theorems~\ref{thm:main}.
\begin{proof}[Proof of Theorem~\ref{thm:main}]
	If there exists an elliptic curve $E/K$ defined over $K$ such that $\End_{K}\left(E\right) \supsetneq \bbz$, then by \cite[Ch.III.Exercise~3.24]{Silverman}, for any prime $p$, $\Gal\left(K\left(E\left[p\right]\right)/K\right)$ is abelian. Hence, $K$ does not satisfy {\normalfont \textbf{Property}}~\eqref{eqn:property}.
	
	Now, we show that $K$ without RCM satisfies {\normalfont \textbf{Property}}~\eqref{eqn:property}. We let $S_{K}$ be a finite set of primes depending only on $K$, as given in \cite[Theorem~1]{LV14}. There exists a positive constant $C_{K} $ depending only on $K$ such that any prime $p > C_{K}$ satisfies the following four conditions: \begin{enumerate}[{(A)}]
		\item\label{it:cyc} $\Im \cyc = \Fpx$,
		\item\label{it:SK} $p \notin S_{K}$,
		\item\label{it:unr} $p$ is unramified in $K$, and
		\item\label{it:large} $p \ge 41$.
	\end{enumerate}
	
	We suppose that there exist a prime $p >C_{K}$ and an elliptic curve $E/K$ defined over $K$ such that $\Gal\left(K\left(E\left[p\right]\right)/K\right)$ is abelian and lead a contradiction.

	By Lemma~\ref{lem:absub_not_Cartan}, \eqref{eqn:Weil}, and Condition~\ref{it:cyc}, we obtain that $G_{E/K,p}$ is contained in a Cartan subgroup.
	
	By the definition of Cartan subgroups, the $\Fp$-representation $E\left[p\right]$ of the absolute Galois group $\Gamma_{K}$ of $K$ is reducible over $\overline{\Fp}$. By Condition~\ref{it:SK} and \cite[Theorem~1]{LV14}, either \cite[Theorem~1~(1)]{LV14} or \cite[Theorem~1~(2)]{LV14} holds. Since $K$ does not have RCM, \cite[Theorem~1~(1)]{LV14} is ruled out. Therefore, \cite[Theorem~1~(2)]{LV14} holds.\label{key}

	By \cite[Theorem~1~(2)]{LV14}, $E\left[p\right]$ is reducible (over $\Fp$). Since $G_{E/K,p}$ is contained in a Cartan subgroup, it is contained in a split Cartan subgroup. In other words, there exist a basis $\cB$ of $E\left[p\right]$ and characters $\psi_{1}$ and $\psi_{2}$ from $\Gamma_{K}$ to $\Fpx$ such that $\rho_{\cB} = \begin{psmallmatrix}\psi_{1}&0\\0&\psi_{2}\end{psmallmatrix}$. By \cite[Theorem~1~(2)]{LV14} and \eqref{eqn:Weil}, $\psi_{1}^{6} = \psi_{2}^{6}$. By Condition~\ref{it:unr} and Lemma~\ref{lem:large_p}, $\Im \rho_{\cB}$ contains a matrix with eigenvalues $1$ and $\alpha^{e}$. Hence, $6e \equiv 0 \pmod{p-1}$. Since $e\in\left\{1,2,3,4,6\right\}$, this contradicts the Condition~\ref{it:large}.

\end{proof}

Theorem~\ref{thm:Cartan} is an incidental consequence of Lemma~\ref{lem:absub_not_Cartan}.
\begin{proof}[Proof of Theorem~\ref{thm:Cartan}]
	By Lemma~\ref{lem:absub_not_Cartan} and \eqref{eqn:Weil}, if $\Gal\left(K\left(E\left[p\right]\right)/K\right)$ is abelian but $G_{E/K,p}$ is not contained in a Cartan subgroup, then $\Im \cyc \subseteq \left\{a^{2}:a\in\Fpx\right\}$. This contradicts that $\sqrt{\left(-1\right)^{\frac{p-1}{2}}p} \notin K$.
\end{proof}

\end{document}